\def\red#1{{\color{red}#1}}
\def\blue#1{{\color{blue}#1}}
\theoremstyle{plain}
\newtheorem{thm}{\bf Theorem}[section]
\newtheorem{prop}[thm]{\bf Proposition}
\newtheorem{lem}[thm]{\bf Lemma}
\newtheorem{cor}[thm]{\bf Corollary}
\theoremstyle{definition}
\newtheorem{defn}[thm]{\bf Definition}
\theoremstyle{remark}
\newtheorem{rem}[thm]{\bf Remark}
\newtheorem{note}[thm]{\bf Note}
\newtheorem{obs}[thm]{\bf Observation}
\DeclareMathOperator{\reg}{reg}
\DeclareMathOperator{\Tor}{Tor}
\def\NN{\mathbb{N}}
\def\ZZ{\mathbb{Z}}
\newsavebox\foobox
\begin{document}

\title[Some classes of sequences of linear type]{Some classes of sequences of linear type}


\author[Neeraj Kumar]{Neeraj Kumar}
\address{Department of Mathematics, Indian Institute of Technology Hyderabad, Kandi, Sangareddy - 502285}
\email{neeraj@math.iith.ac.in}


\author[Chitra Venugopal]{Chitra Venugopal}
\address{Department of Mathematics, Indian Institute of Technology Hyderabad, Kandi, Sangareddy - 502285}
\email{ma19resch11002@iith.ac.in}


\subjclass{{13D02,13P20,13A30,13A02}} 

\keywords{$y$-regularity, Rees algebra, ideal of linear type, $d$-sequence, $M$-sequence, weak $d$-sequence}

\date{\today}

\begin{abstract}
Given a graded ring \( A \) and a homogeneous ideal \( I \), the ideal is said to be of linear type if the Rees algebra of \( I \) is isomorphic to the symmetric algebra of \( I \). In general, $y$-regularity of Rees algebra of $I$ is  $0 \Rightarrow$ $I$ is generated by a $d$-sequence $\Rightarrow I$ is of linear type. We show that \( d \)-sequence ideals represent a significantly smaller subset of ideals of linear type in terms of \( y \)-regularity. Moreover, we identify a class of \( d \)-sequences whose arbitrary powers generate ideals of Gr\"obner linear type.  Notably, while \( d \)-sequences are inherently weak \( d \)-sequences, we highlight a specific class of algebras where weak \( d \)-sequences are indeed \( d \)-sequences.
\end{abstract}

\begin{center}
\maketitle
\end{center}

\date{\today}

\section*{Introduction}

Let \( A \) be a commutative graded ring and \( I \) be a homogeneous ideal of \( A \). Consider \( \text{Sym}(I) = \oplus_{i \geq 0} S^i(I) \) to be the symmetric algebra of \( I \), where \( S^i(I) \) denotes the \( i \)-th symmetric power of \( I \), and \( \mathcal{R}(I) = \oplus_{i \geq 0} I^i \) to be the Rees algebra of \( I \). In general, there exists a canonical surjection, from \( \text{Sym}(I) \) to \( \mathcal{R}(I) \). An ideal \( I \) is said to be of linear type if the Rees algebra of \( I \) coincides with the symmetric algebra of \( I \). The term `linear type' derives from the fact that, when $\text{Sym}(I) \cong \mathcal{R}(I)$, the defining relations of the Rees algebra are linear in the new variables introduced in the Rees algebra construction. The presentation matrices of these ideals contribute to the defining relations of the corresponding Rees algebras, and hence a significant amount of research has been and continues to be done on these classes of ideals.

\vspace{1mm}

Different types of sequences have been defined over time, which aids in the study of Rees and symmetric algebras. There is, therefore, a plethora of literature in this area. We are particularly interested in the types of sequences that generate ideals of linear type, specifically \(d\)-sequences.

\vspace{1mm}

The theory of \(d\)-sequences was introduced by Huneke in the 1980s \cite{CH1,CH2} as a notion of a "weak" regular sequence, to help in the study of the depth of asymptotic powers of a homogeneous ideal in a graded ring. Numerous examples of \(d\)-sequences are provided in \cite{CH1}. Recently, combinatorial characterizations for edge binomials of trees and unicycle graphs forming \(d\)-sequences have been given in \cite{AN2, AN1}.

\vspace{1mm}

Huneke (\cite{CH2}) and Valla (\cite{Valla}) independently proved that \(d\)-sequences generate ideals of linear type. By the work of Römer in \cite{Rom1}, the degree of the syzygies of the Rees algebras of the ideals generated by \(d\)-sequences is bounded above by the homological degree. For the general class of ideals of linear type, it is evident that the first syzygies of the Rees algebras of these ideals are linear in the new set of variables. However, it would be interesting to determine if there is a bound on the degrees of the higher syzygies in general. In one of the main results in this article, Theorem \ref{LT_no-D}, we show that there is no such bound on the degree of the syzygies for the general class of ideals of linear type. 

\vspace{1mm}

Conca, Herzog, and Valla \cite{CHV} proved that an ideal \( I \) is of (Gröbner) linear type if its initial ideal with respect to some monomial order \( \tau \) is of (Gröbner) linear type. It has been observed that properties such as Cohen-Macaulayness and normality are also preserved when moving from \( \mathcal{R}(I) \) to \( \mathcal{R}(\text{in}_{\tau}(I)) \). This observation has motivated research into conditions that guarantee a monomial ideal is of linear type. A homogeneous ideal \( I \) of a standard graded ring \( A \) is of Gröbner linear type if it is of linear type, and the linear relations of the defining ideal of \( \mathcal{R}(I) \cong A[Y]/J \) form a Gröbner basis with respect to some monomial order on \( A[Y] \). An important class of monomial sequences is \( M \)-sequences, introduced by Conca and De Negri in \cite{CN}. Besides being of Gröbner linear type, the explicit defining relations of the Rees algebra of ideals generated by such sequences are also known. In Section \ref{$M$-sequences and $d$-sequences}, we describe conditions under which an \( M \)-sequence becomes a \( d \)-sequence and vice versa.

\vspace{1mm}

The concept of a weak \(d\)-sequence first appeared in \cite{CH4} and was defined by Huneke, where the ordering of the elements in the sequence is with respect to a finite partially ordered set. These sequences help in the computation of the depth of the asymptotic powers of the ideals generated by them. There are many natural examples of weak \(d\)-sequences in \cite{CH1, CH4}. It is known that \(d\)-sequences are weak \(d\)-sequences \cite[Corollary 1.1]{CH4}.

\vspace{1mm}

Another major class of ideals generated by weak \(d\)-sequences comes from algebras with straightening laws defined by DeConcini et al. 
\cite{Hodge_algebras}. Maximal order Pfaffians corresponding to a generic skew-symmetric matrix of odd order are an example of such a class of ideals \cite[Example 1.20]{CH4}. For a skew-symmetric matrix \( X \), the Pfaffian of \( X \), denoted by \(\text{Pf}(X)\), is defined as the square root of the determinant of \( X \) \cite{Artin}. Maximal order Pfaffians of a skew-symmetric matrix of odd order \( n \) are obtained by considering the Pfaffians of submatrices of order \( n-1 \) obtained by deleting a row and the corresponding column of the matrix \( X \) \cite{DBDE}. An ideal generated by these Pfaffians corresponding to a skew-symmetric matrix of indeterminates and odd order is proved to be of linear type \cite{Baetica}. In Corollary \ref{max-pfaffians}, we prove that these Pfaffians, in fact, form a \(d\)-sequence.

\section{Preliminaries} \label{sec1}

\begin{defn} \label{first_defn_dseq}
Let $\{ \bf{a} \}$ denote a sequence $\{ a_1, \ldots, a_n \}$ in a graded ring $A$ and $N$ be a finitely generated graded $A$-module. Let $I=\langle a_1, \ldots, a_n\rangle$ and $I_i=\langle a_1, \ldots, a_i \rangle$. Then the sequence $\{\bf{a}\}$ is a \textit{$d$-sequence} for $N$ if for each integer $i=1, \ldots, n$ and each integer $k=i, \ldots, n$,
$$(I_{i-1}N:_N a_i) \cap I = I_{i-1}N$$
and $\{\bf{a}\}$ minimally generates $I$.
\end{defn}

\begin{thm} \cite[Theorem 3.1]{CH2} \label{d-seq_LT}
Let $A$ be a ring and $\{a_1, \ldots, a_n \}$ be  a $d$-sequence in $A$. Then $I=\langle a_1, \ldots, a_n \rangle$ is of linear type.  \end{thm}

Proper sequences and $s$-sequences were introduced to study invariants associated with symmetric algebras \cite{HSV1,s-seq}

   \begin{defn}
   For a finitely generated $A$-module $N$, its generating set $\{a_1, \ldots, a_n\}$ form an \textit{$s$-sequence} (with respect to an admissible term order for the monomials in $y_i$ with $y_1 <y_2 <\ldots <y_n$) if for $Sym(N) \cong S/J$ where $S$ is a bigraded polynomial ring $S=A[y_1,\ldots,y_n]$ and $J$ the defining ideal of the symmetric algebra, $in(J)= \langle L_1y_1,  \ldots, L_ny_n  \rangle $ where $L_i=(\langle a_1,\ldots,a_{i-1} \rangle N:a_i)$. 
   
   In particular, if $L_1 \subseteq L_2 \subseteq \ldots \subseteq L_n$, then $\{a_1, \ldots, a_n\}$ is said to form a \textit{strong $s$-sequence} (cf. \cite{s-seq}).
   \end{defn}

\begin{defn} \label{bigraded_reg}
Let $R$ be a bigraded $K$-algebra where $K$ is a field and $M$ be a finitely generated $R$-module.
Let $t_{i_y}^R(M)=\sup\{j: \Tor_i^R(M,K)_{(*,j)} \neq 0\}$ with  $t_{i_y}^R(M)=- \infty$ if $\Tor_i^R(M,K)_{(*,j)}=0$ for all $j \geq 0$. Then the \textit{$y$-regularity} of $M$ denoted by  reg$_y^R(M)$ is then defined as, 
$$ \reg_y^R(M)=\sup \{ t_{i_y}^R(M) -i, \, i \geq 0\}.$$
\end{defn}

For a homogeneous ideal $I$ of a graded ring $A$, $\mathcal{R}(I)\cong \oplus_{i \geq 0}I^i$ can be seen as a bigraded algebra with the natural bigrading given by $\mathcal{R}(I)_{(i,j)}=(I^j)_i$. Similarly, symmetric algebras can be seen as bigarded algebras. The following result by Romer characterizes an ideal generated by a strong $s$-sequence and a $d$-sequence in terms of the vanishing of $y$-regularity of the corresponding symmetric algebra and Rees algebra, respectively. 

\begin{thm} (\cite[Corollary 3.2 \footnote{In this result, by an $s$-sequence, the author means a strong $s$-sequence.}]{Rom1} ) \label{Romer_characterization}
 Let $I=\langle a_1, \ldots ,a_m \rangle \subset K[x_1, \ldots, x_n]$ be an equigenerated graded ideal. Then, 
 \begin{enumerate}
     \item  $I$ is generated by an $s$-sequence (with respect to the reverse lexicographic order) if and only if reg$_y(Sym(I))=0$.
     \item $I$ is generated by a $d$-sequence if and only if reg$_y(\mathcal{R}(I))=0$.
 \end{enumerate}
\end{thm}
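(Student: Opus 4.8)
The plan is to recast both equivalences as statements about the \emph{$y$-linearity} of a minimal bigraded free resolution and then to invoke two classical bodies of work: approximation complexes (Herzog--Simis--Vasconcelos) for the Rees algebra, and the initial-ideal description of $s$-sequences (Herzog--Restuccia--Tang) for the symmetric algebra. Put $A=K[x_1,\dots,x_n]$, let $I$ be generated by $a_1,\dots,a_m$ of common degree $d$, and bigrade $S=A[y_1,\dots,y_m]$ by $\deg x_i=(1,0)$ and $\deg y_j=(d,1)$. Then the surjections $\psi\colon S\to Sym(I)$, $y_j\mapsto a_j$, and $\varphi\colon S\to\mathcal R(I)$, $y_j\mapsto a_jt$, are bigraded; their kernels satisfy $J:=\ker\psi\subseteq\mathcal K:=\ker\varphi$ with $J_{(*,1)}=\mathcal K_{(*,1)}$, and $J$ is generated in $y$-degree $1$ by the linear syzygies of $\mathbf a$. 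Since $Sym(I)$ and $\mathcal R(I)$ are cyclic $S$-modules generated in $y$-degree $0$, one has $t_{0_y}=0$, so $\reg_y\ge 0$ always; hence $\reg_y=0$ is equivalent to $t_{i_y}(\,\cdot\,)\le i$ for all $i\ge 0$, i.e.\ to the minimal bigraded free resolution being $y$-linear. I would prove the theorem in this reformulated form.

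For the forward direction of (2), assume $\mathbf a$ is a $d$-sequence. By Huneke's theorem $I$ is then of linear type, so $\mathcal R(I)=Sym(I)=S/J$, and by Herzog--Simis--Vasconcelos the approximation complex $\mathcal Z_\bullet(\mathbf a)$ is acyclic with $H_0=Sym(I)$. Its $i$-th term $Z_i(\mathbf a;A)\otimes_A S$, placed in $y$-degree $i$, has $y$-linear differential; replacing each $Z_i$ by an $A$-free resolution, all of whose terms remain in $y$-degree $i$, and totalizing yields an $S$-free resolution of $\mathcal R(I)$ whose $p$-th free module is generated in $y$-degree $\le p$. Hence $t_{p_y}(\mathcal R(I))\le p$ for all $p$, so $\reg_y(\mathcal R(I))=0$.

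The converse of (2) is the main obstacle. Assume $\reg_y(\mathcal R(I))=0$. From $t_{1_y}(\mathcal R(I))\le 1$ the ideal $\mathcal K$ is generated in $y$-degree $1$, which together with $\mathcal K_{(*,1)}=J_{(*,1)}$ forces $\mathcal K=J$; thus $I$ is of linear type and $\mathcal R(I)=S/J$. It remains to promote the full vanishing $t_{i_y}(S/J)\le i$ (all $i$) to the $d$-sequence identities $(I_{i-1}:a_i)\cap I=I_{i-1}$ for all $i$. Here the plan is to relate the minimal resolution of $S/J$ to $\mathcal Z_\bullet(\mathbf a)$ (whose zeroth homology is $S/J=\mathcal R(I)$ here) and to the Koszul homology $H_\bullet(\mathbf a;\mathcal R(I))$ of $\mathbf a$ acting on the Rees algebra: one expects $y$-linearity to translate into a concentration/vanishing property of $H_\bullet(\mathbf a;\mathcal R(I))$ — read off the low-$y$-degree strands of $\Tor^S_\bullet(\mathcal R(I),K)$ — which, by the homological characterization of $d$-sequences via the acyclicity criteria for approximation complexes, is equivalent to $\mathbf a$ being a $d$-sequence. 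This passage, from the abstract vanishing $t_{i_y}\le i$ back to the exact colon-ideal identities, is where essentially all the work lies.

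For (1), suppose first that $\mathbf a$ is a strong $s$-sequence with respect to the reverse lexicographic order, so by Herzog--Restuccia--Tang $\ini_\tau(J)=\langle L_1y_1,\dots,L_my_m\rangle$ for a term order $\tau$ refining the $y$-grading, with $L_i=(a_1,\dots,a_{i-1}:a_i)$ and $L_1\subseteq\dots\subseteq L_m$. Upper-semicontinuity of bigraded Betti numbers under Gröbner degeneration gives $\reg_y(S/J)\le\reg_y(S/\ini_\tau(J))$, so it suffices to check $\reg_y(S/N)=0$ for $N:=\langle L_1y_1,\dots,L_my_m\rangle$; with $L_1\subseteq\dots\subseteq L_m$ the ideal $N$ is stable in the $y$-variables, so an Eliahou--Kervaire-type resolution — assembled by iterated mapping cones over the short exact sequences that adjoin one generator at a time — is $y$-linear, giving $\reg_y(S/N)=0$. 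For the converse, $\reg_y(Sym(I))=0$; since $\reg_y(S/J)=\reg_y(S/G)$ for $G$ the generic initial ideal of $J$ with respect to reverse lexicographic order in the $y$-variables (the bigraded analogue of the Bayer--Stillman theorem), $G$ is a monomial ideal, strongly stable in the $y$'s, with $\reg_y(S/G)=0$; by the description of regularity for stable ideals all generators of $G$ have $y$-degree $\le 1$, whence $G=\langle L_1y_1,\dots,L_my_m\rangle$, and strong stability forces $L_1\subseteq\dots\subseteq L_m$ — precisely the statement that the corresponding generic generating set of $I$ is a strong $s$-sequence.
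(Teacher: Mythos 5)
First, a structural point: the paper does not prove this theorem at all — it is quoted from R\"omer (\cite[Corollary 3.2]{Rom1}) and used as a black box — so there is no internal proof to compare with, and your proposal has to stand on its own. Judged that way, the two ``only if'' halves you sketch are reasonable outlines: $d$-sequence $\Rightarrow$ linear type $\Rightarrow$ $\reg_y(\mathcal R(I))=0$ via the $y$-linear, acyclic approximation complex and a Cartan--Eilenberg totalization; and strong $s$-sequence $\Rightarrow$ $\reg_y(Sym(I))=0$ via Betti-number semicontinuity under the degeneration to $N=\langle L_1y_1,\dots,L_my_m\rangle$. Even here, though, the step ``$\reg_y(S/N)=0$'' is justified by appeal to an ``Eliahou--Kervaire-type resolution'', which does not apply as stated: the $L_i$ are arbitrary graded ideals of $A=K[x_1,\dots,x_n]$, so $N$ is not a stable monomial ideal; what is actually needed is an induction exploiting $L_1\subseteq\cdots\subseteq L_m$ (for instance via the filtration $N_k=\langle L_1y_1,\dots,L_ky_k\rangle$, where one checks $y_k$ is a nonzerodivisor modulo $N_{k-1}$ and controls the $y$-degrees of the resulting extensions).

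The genuine gap is that both ``if'' directions — the substantive content of R\"omer's result — are not proved. For (2) you state explicitly that the passage from $t_{i_y}(\mathcal R(I))\le i$ for all $i$ back to the colon identities $(I_{i-1}:a_i)\cap I=I_{i-1}$ ``is where essentially all the work lies'' and offer only an expectation about Koszul homology of $\mathbf a$ on $\mathcal R(I)$; that is exactly the implication being claimed, so nothing is established. For (1), the converse rests on an unproved ``bigraded Bayer--Stillman'' invariance of $\reg_y$ under generic initial ideals in the $y$-variables, and on the jump from ``$G$ has generators of $y$-degree $1$'' to ``$G=\langle L_1y_1,\dots,L_my_m\rangle$ with $L_i=(a_1,\dots,a_{i-1}):a_i$'': this conflates the fully monomial gin (monomial in the $x$'s and $y$'s) with the partial initial ideal entering the definition of an $s$-sequence (monomial only in the $y$'s, with coefficient ideals in $A$), and it does not explain why the coefficient ideals are the colon ideals of a (possibly generic) generating set of $I$, nor why stability yields precisely the chain $L_1\subseteq\cdots\subseteq L_m$. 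For comparison, R\"omer's own route handles both directions at once: he characterizes $\reg_y(R)=0$ for a standard bigraded algebra in terms of generic $(0,1)$-forms being almost regular, equivalently forming a $d$-sequence (\cite[Theorem 2.2]{Rom1}), and then reads off (1) and (2) from the bigradings $\mathcal R(I)_{(i,j)}=(I^j)_i$ and the analogous structure of $Sym(I)$; some version of that mechanism (or of the filter-regular characterization in \cite{f.r seq}) is what your sketch is missing for the reverse implications.
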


\begin{defn}
A sequence $\{ \bf{a} \}$ is a \textit{proper sequence} if $a_iH_j(a_1, \ldots, a_{i-1})=0$ for $i=1, \ldots, n$ and $j \geq 1$ where $H_j(a_1, \ldots, a_{i-1})$ denotes the $j^{th}$ homology module of the Koszul complex on $ \{ a_1, \ldots, a_n \}$. (cf. \cite{HSV1})
\end{defn}

The following result shows that the notion of strong $s$-sequences is equivalent to the notion of proper sequences.

\begin{prop} \cite[Corollary 3.4]{s-seq} \label{strong_s_seq&proper_seq}
Let $I$ be an ideal generated by a sequence $\{\bf{a} \}$ $=\{a_1, \ldots, a_n \}$ in a ring. Then $\{ \bf{a} \}$ is a strong $s$-sequence with respect to the reverse lexicographic term order if and only if $\{\bf{a} \}$ is a proper sequence. 
\end{prop}

Tang gave equivalent conditions for a monomial sequence to be a $d$-sequence or a proper sequence. For $i, j \in \NN$, let $(m_i,m_j)$ denote the greatest common divisor of $m_i$ and $m_j$.

\begin{thm} \label{monomial_d&proper_seq}
Let $\{a_1, \ldots, a_n\}$ be a monomial sequence. Then,
\begin{enumerate}
    \item \cite[Theorem 3.1]{monomial_seq} $\{a_1, \ldots, a_n\}$ is a proper sequence if and only if  $m_i \nmid m_j$ for $i \neq j$ and further satisfy the condition $(m_i,m_j)|m_k$ for $1 \leq i <j <k \leq n$. 
    \item \cite[Theorem 2.1]{monomial_seq}  $\{a_1, \ldots, a_n\}$ is a $d$-sequence if and only if it satisfies the condition of a proper sequence and further satisfies the condition $(m_i,m_j)=(m_i,m_j^2)$ for $ 1 \leq i <j\leq s$.
\end{enumerate}
\end{thm}

$M$-sequences are a special type of monomial sequence that have certain desirable properties associated with them.

\begin{defn}
A sequence of monomials $\{ m_1, \ldots, m_s \}$ in a set of indeterminates $X=\begin{bmatrix} x_1 & \cdots & x_n \end{bmatrix}$ is said to be an \textit{$M$-sequence}, if for all $1 \leq i \leq s$, there exists a total order on the set of indeterminates, say $x_1 \prec \ldots \prec x_n$ with $m_i=x_1^{a_{i_1}} \cdots x_n^{a_{i_n}}$ and $a_{i_1} >0, \ldots, a_{i_n}>0$, such that whenever $x_k | m_j$ with $1 \leq k \leq n$ and $i<j$, then $x_k^{a_{i_k}}\cdots x_n^{a_{i_n}}|m_j$. (cf. \cite{CN})
\end{defn}

Some interesting properties of $M$-sequences are the following:

\begin{thm} \cite[Theorem 2.4]{CN} \label{m-seq_glt}
Let $I$ be an ideal generated by an $M$-sequence $\{m_1, \ldots, m_s\}$ in a set of indeterminates $X$ over a field $K$. Let $\mathcal{R}(I) \cong S/J$, where $S = K[X, Y]$, $Y = \begin{bmatrix} y_1 & \cdots & y_s \end{bmatrix}$, and $J$ is the defining ideal of the Rees algebra of $I$. Then the minimal generators of $J$ have the form $\dfrac{m_i}{(m_i, m_j)} y_j - \dfrac{m_j}{(m_i, m_j)} y_i$
for $1 \leq i < j \leq s$, and they form a Gr\"obner basis of $J$
\end{thm}

\begin{lem} \cite[Lemma 2.2]{CN} \label{powers_of_mseq}
Let $\{m_1, \ldots, m_s \}$ be an $M$-sequence in a set of indeterminates $X$. Then $\{m_1^{n_1}, \ldots, m_s^{n_s} \}$ is an $M$-sequence where $1 \leq n_1 \leq \cdots \leq n_s$ are integers.
\end{lem}

\section{A class of ideals of linear type, not generated by d-sequences}  \label{sec2}

In this section, we provide a class of ideals of linear type with an increasing value of $y$-regularity of the associated Rees algebras. In other words, we show how irregularly the regularity of the Rees algebra corresponding to ideals of linear type can behave with respect to the second degree on moving away from the ideals generated by $d$-sequences.

For $n \in \NN$, let $C_n$ be a cycle on $n$ vertices labelled as $x_1, \ldots,x_n$, $P_{\ell}(C_n)$ denote the ideal generated by the paths of length $\ell$ in $C_n$ in $B=K[x_1,\ldots, x_n]$. It is observed that when $n$ is odd ($n \geq 5$),  $I=P_{n-3}(C_n)=\langle m_1, \ldots, m_n \rangle$, where $m_i=x_ix_{i+1} \cdots x_{i+n-3}$ with indices in $\ZZ_n$, gives a class of ideals of linear type with an increasing value of $y$-regularity of $\mathcal{R}(I)$.

From the results in \cite{Brumatti_Silva}, \cite{Ali_Sara} and \cite{Erey}, the minimal free resolution of $B/I$ has the form,
%

{\small{
  \begin{equation}\label{resolution}
    0 \longrightarrow B(-n) \stackrel {\phi_{3}} \longrightarrow B(-(n-1))^{n} \stackrel {\phi_{2}}
    \longrightarrow B(-(n-2))^{n}\stackrel {\phi_{1}} \longrightarrow B \longrightarrow 0
    \end{equation}

where $\phi_1= \begin{bmatrix}
   m_1 &\cdots & m_{n} \\
   \end{bmatrix}$, $\phi_2= \begin{bmatrix}
   x_{n-1} & 0 & 0 & \cdots &0 & -x_{n} \\
   -x_1 & x_n & 0 & \cdots & 0 &0 \\
   0 & -x_2 & x_1 & \cdots &0 &0 \\
   \vdots & \vdots & \vdots & \ddots & \vdots & \vdots \\
   0 & 0 & 0 & \cdots & x_{n-3} & 0 \\
   0 & 0 &0 & \cdots & -x_{n-1} &x_{n-2}
   \end{bmatrix}$
   and 

  $\phi_3= \begin{bmatrix} 
   x_n & x_1 & x_2 & \cdots & x_{n-1}
   \end{bmatrix}^T$}}.

Further it has been proved in \cite[Theorem 3.4]{Brumatti_Silva} that $I=P_{n-3}(C_n)$ is of linear type. However, the following result shows that it, in fact, produces a class of ideals with an increasing value of $y$-regularity and hence not generated by $d$-sequences.    

\begin{thm} \label{LT_no-D}
       Let $n=2r+1$, $r \geq 2$ and $C_n$ be a cycle on $n$ vertices. Then for $I=P_{n-3}(C_n)$, reg$_y\mathcal{R}(I) \geq r-1$.
\end{thm}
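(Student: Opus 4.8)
Since $I=P_{n-2}(C_n)$ is of linear type by \cite[Theorem 3.4]{Brumatti_Silva}, the canonical surjection $Sym(I)\twoheadrightarrow\mathcal{R}(I)$ is an isomorphism, so $\reg_y$ may be computed on $Sym(I)$. The resolution \eqref{resolution} gives a presentation $Sym(I)=S/\mathcal{J}$, where $S=B[T_1,\dots,T_n]$ is standard bigraded ($\deg x_i=(1,0)$, $\deg T_i=(0,1)$) and $\mathcal{J}=\langle G_1,\dots,G_n\rangle$ is generated by the entries $G_j$ of the row vector $(T_1\ \cdots\ T_n)\,\phi_2$, each of bidegree $(1,1)$; the column $\phi_3$ then records the syzygy $\sum_i(\phi_3)_iG_i=0$, of bidegree $(2,1)$, among them. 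As $\mathcal{R}(I)$ is a domain of dimension $n+1$, it admits a finite bigraded minimal free resolution over $S$, and $\reg_y\mathcal{R}(I)=\sup_p\{t^S_{p_y}(\mathcal{R}(I))-p\}$. Hence the goal is, for each $r\ge 2$, to exhibit a homological index $p$ together with a minimal generator of $\Tor^S_p(\mathcal{R}(I),K)$ of $y$-degree at least $p+r-1$.

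To locate such a generator I would argue through local cohomology. Putting $\mathfrak{q}=\mathcal{R}(I)_{\ge 1}=I\mathcal{R}(I)$, the bigraded analogue of the local-cohomology description of Castelnuovo--Mumford regularity gives that $\reg_y\mathcal{R}(I)$ is the supremum, over $i\ge0$, of $i$ plus the largest $y$-degree in which $H^i_{\mathfrak{q}}(\mathcal{R}(I))$ is nonzero. I would access these modules via the approximation ($\mathcal{Z}$-)complex of $Sym(I)$ built on $m_1,\dots,m_n$, whose terms and twists are explicit from \eqref{resolution}: its zeroth homology is $\mathcal{R}(I)$, but, this being precisely the content of the theorem, $I$ is not generated by a $d$-sequence, so the complex is not acyclic \cite{HSV1,HSV2}, and the ``obstruction'' homology modules $H_i$, $i\ge1$, are nonzero. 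For the odd cycle $C_{2r+1}$ I expect these obstruction modules to be supported in $y$-degrees growing linearly with $r$: the paths $m_i$ overlap in a single vertex everywhere except at the one place where $C_n$ ``closes up'', and it is this wrap-around defect that forces the relevant Koszul chains to be of length about $r$ before becoming cycles (this is where the explicit entries of $\phi_2$ and $\phi_3$ must be used). Feeding such a class back through the spectral sequence (equivalently the truncation filtration) of the $\mathcal{Z}$-complex should turn a generator of the top nonzero $H_i$ into a minimal generator of $\Tor^S_p(\mathcal{R}(I),K)$ in $y$-degree $p+r-1$, giving $\reg_y\mathcal{R}(I)\ge r-1$; for $r=2$ this recovers $\reg_y\mathcal{R}(P_3(C_5))=1$ as recorded in Note~\ref{strict_imp}(6).

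The step I expect to be the main obstacle is this non-vanishing itself. A \emph{lower} bound on $\reg_y$ requires certifying that the proposed class in the $\mathcal{Z}$-complex, equivalently the proposed graded Betti number of $S/\mathcal{J}$, genuinely survives, i.e.\ is neither a boundary nor cancelled against an adjacent term of the minimal resolution; this is exactly the point at which the parity $n=2r+1$ is essential, and where soft arguments will not suffice. Note in particular that the class cannot be detected on the fiber cone $K[m_1,\dots,m_n]$: when $3\nmid n$ the $m_i$ are algebraically independent (the relevant circulant matrix is invertible), so the surviving class must involve the $x$-variables nontrivially, which is why the argument is run on all of $\mathcal{R}(I)$ rather than on the special fiber. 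Once the single required non-vanishing is established, what remains is routine bookkeeping with the long exact sequences of local cohomology and with the approximation complex.
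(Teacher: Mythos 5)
Your proposal sets the stage correctly (linear type, the presentation of $\mathcal{R}(I)$ by the bidegree-$(1,1)$ entries of $(T_1\cdots T_n)\phi_2$, and the reduction to exhibiting a minimal generator of $\Tor_p^S(\mathcal{R}(I),K)$ of $y$-degree at least $p+r-1$), but it stops exactly where the theorem begins: no candidate class is ever written down, and no argument is given that any class of $y$-degree growing linearly in $r$ is nonzero, let alone that it survives as a minimal generator. The statements ``I expect these obstruction modules to be supported in $y$-degrees growing linearly with $r$'' and ``should turn a generator \ldots into a minimal generator of $\Tor_p^S$'' are precisely the content of the theorem, and you yourself flag the required non-vanishing as the main obstacle without resolving it. There is also a circularity risk in the route you sketch: you invoke ``$I$ is not generated by a $d$-sequence'' to argue the approximation complex is non-acyclic, but by Theorem \ref{Romer_characterization} that statement is a consequence of the bound being proved (and even granted, it would only give $\reg_y\mathcal{R}(I)\ge 1$, not the growth $r-1$); moreover, since $I$ is of linear type, non-acyclicity of the relevant $\mathcal{Z}$-complex is not something that can be taken for granted.

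For comparison, the paper's argument is elementary, explicit, and confined to homological degree $2$: it exhibits a concrete element of $\ker\delta_1$, i.e.\ a first syzygy of the defining relations $x_{i-2}y_i-x_iy_{i+1}$, whose coefficients are squarefree monomials purely in the $y$-variables of degree $r$, constructed inductively from the analogous syzygy for $P_{n-4}(C_{n-2})$; it then shows this syzygy cannot be generated by kernel elements of smaller $y$-degree by passing to $S''=K(Y)[X]$ and tracking which $y_j$ must divide the coefficient $m_1$, the odd length $n=2r+1$ forcing the contradiction $y_{2r+1}\nmid m_1$. That divisibility bookkeeping is the whole quantitative content of the bound; your outline delegates it to an unproven ``survival'' step in a spectral sequence, so as it stands the proposal does not prove the theorem.
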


\begin{proof}
Let $I=P_{n-2}(C_n)$ for $n=2r+1$, $r \geq 2$. Then the defining relations of the Rees algebra of $I$ is given by $\phi_2 \cdot Y^T \subseteq K[X,Y]:=S$ where $X=\begin{bmatrix} x_1 \cdots x_n \end{bmatrix}$ and $Y=\begin{bmatrix} y_1 \cdots y_n \end{bmatrix}$ with bidegree of $x_i=(1,0)$ and bidegree of $y_j=(0,1)$, $1 \leq i,j \leq n$. In particular, the defining relations can be explicitly given by 
\begin{equation} \label{def_eqns}
x_{i-2}y_i-x_{i}y_{i+1}, \, 1 \leq i \leq n
\end{equation}
where the indices are considered in $\ZZ_n$.

Let $\delta_1^n:S^n \rightarrow S$ denote the map in the minimal free resolution of $\mathcal{R}(I)$ where $I=P_{n-3}(C_n)$, defined as $\delta_1^n(e_i)=x_{i-2}y_i-x_{i}y_{i+1}$ where $e_i$, $1 \leq i\leq n$ is a basis of $S^n$ and the indices are in $\mathbf{\ZZ}_n$. 


For $n=5$, {\footnotesize$$\delta_1^5(e_1)=x_4y_1-x_1y_2,\; \delta_1^5(e_2)=x_5y_2-x_2y_3, \; \delta_1^5(e_3)=x_1y_3-x_3y_4, \; \delta_1^5(e_4)=x_2y_4-x_4y_5, \; \delta_1^5(e_5)=x_3y_5-x_5y_1$$}
Then substituting the above values one obtains, 
\begin{equation} \label{n=5}
\delta_1^5(y_3y_5e_1+y_1y_4e_2+y_2y_5e_3+y_1y_3e_4+y_2y_4e_5)=0.
\end{equation}

We claim that for $n=2r+1$, $ r \geq 3$, the following relation belongs to the kernel of $\delta_1^n$.
\begin{equation} \label{eq_yreg}
\sum_{i=1}^{n-2} (\alpha_{i \, n-2}ye_i) + y_1y_3\cdots y_{2j+1} \cdots y_{n-2}e_{n-1}+y_2y_4 \cdots y_{2j} \cdots y_{n-1}e_{n}
\end{equation} 
where $j \in \NN$ and $y= \left \{ \begin{array}{ccc}  
y_n & if  & i \equiv 1 \mod 2 \\
y_{n-1} & if  & i \equiv 0 \mod 2
\end{array}
\right.$, $\alpha_{i \, n-2}$ denotes the coefficients of $e_i$ for $1 \leq i \leq n-2$ of bidegree $(0,r-1)$ in the kernel of $\delta_1^{n-2}$ of the form of relation (\ref{eq_yreg}).  

We prove this by applying induction on \( r \). For \( r = 3 \), by computations similar to the \( n = 5 \) case, we obtain:
\[
\delta_1^7 (y_3y_5y_7e_1 + y_1y_4y_6e_2 + y_2y_5y_7e_3 + y_1y_3y_6e_4 + y_2y_4y_7e_5 + y_1y_3y_5e_6 + y_2y_4y_6e_7) = 0
\]

This expression is of the required form, with:
\[
\alpha_{15} = y_3y_5, \quad \alpha_{25} = y_1y_4, \quad \alpha_{35} = y_2y_5, \quad \alpha_{45} = y_1y_3, \quad \alpha_{55} = y_2y_4
\]
coming from Equation (\ref{n=5})


Now assume $n=2r+1$, $r > 3$. Then by induction hypothesis, $$ \sum_{i=1}^{n-4} (\blue{\alpha_{i \, n-4}z_i}e_i) + \blue{y_1y_3\cdots y_{2j+1} \cdots y_{n-4}}e_{n-3}+\blue{y_2y_4 \cdots y_{2j} \cdots y_{n-3}}e_{n-2}$$ belongs to the kernel of $\delta_1^{n-2}$ where $j \in \NN$ and $z_i= \left \{ \begin{array}{ccc}  
y_{n-2} & if  & i \equiv 1 \mod 2 \\
y_{n-3} & if  & i \equiv 0 \mod 2
\end{array}
\right..$ 

\vspace{2mm}

Since $\delta_1^n(e_i)=x_{i-2}y_i-x_{i}y_{i+1}$ for $ 1 \leq i \leq n$, one obtains, 
\[
\begin{split}
& \sum_{i=1}^{n-4} (\blue{\alpha_{i \, n-4}z_i}w_i\delta_1^n(e_i)) + \blue{y_1y_3\cdots y_{2j+1} \cdots y_{n-4}}y_{n-1}\delta_1^n(e_{n-3})+\blue{y_2y_4 \cdots y_{2j} \cdots y_{n-3}}y_{n}\delta_1^n(e_{n-2})  \\
& + y_1y_3\cdots y_{2j+1} \cdots y_{n-2}\delta_1^n(e_{n-1})+y_2y_4 \cdots y_{2j} \cdots y_{n-1}\delta_1^n(e_{n})=0,
\end{split}
\]
where $j \in \NN$ and $w_i= \left \{ \begin{array}{ccc}  
y_{n} & if  & i \equiv 1 \mod 2 \\
y_{n-1} & if  & i \equiv 0 \mod 2
\end{array}
\right.$ This implies that relation (\ref{eq_yreg}) belongs to the kernel of $\delta_1^n$.

To prove that $y$-regularity strictly increases for this class of ideals, it suffices to show that relation (\ref{eq_yreg}) cannot be generated by elements in the kernel of $\delta_1^n$ with $y$-degree strictly less than $r$.

To understand the proof, consider the case \( I = P_4(C_7) \). Let:
\begin{equation} \label{eg}
\delta_1^7 \left( \sum_{i=1}^{7} m_i e_i \right) = 0
\end{equation}
for some homogeneous polynomials \( m_i \in S \) of bidegree \( (0,2) \). Now, consider \( \sum_{i=1}^{7} m_i \delta_1^7(e_i) \) as polynomials in \( A[X] \), where \( A \) is the field of fractions of \( K[Y] \).

Since \( \delta_1^7(e_3) = x_1 y_3 - x_3 y_4 \) and \( \delta_1^7(e_5) = x_3 y_5 - x_5 y_6 \), following the coefficients of \( x_1 \) and \( x_3 \) in these equations, one finds that for the coefficient of \( x_1 \) to vanish in Equation (\ref{eg}), \( y_3 y_5 \) must divide \( m_1 \). However, since \( m_1 \) is of bidegree \( (0,2) \), this implies \( m_1 = y_3 y_5 \). But, since \( \delta_1^7(e_7) = x_5 y_7 - x_7 y_1 \), the coefficient of \( x_5 \) in \( \delta_1^7(e_7) \) will be \( y_7 \), and clearly \( y_7 \nmid m_1 \). This is a contradiction to Equation (\ref{eg}).

We give a general proof for the same in the following paragraph.

Without loss of generality, assume that 
\begin{equation} \label{eg2}
\delta_1^n \left( \sum_{i=1}^{n} m_i e_i \right) = 0
\end{equation}
for some homogeneous polynomials \( m_i \in S \) of bidegree \( (0, r-1) \). Considering the general form of \( \delta_1^n(e_i) \) and viewing \( \sum_{i=1}^{n} m_i \delta_1^n(e_i) \) as a polynomial in \( S'' \), where \( S'' = K(Y)[X] \) (with \( K(Y) \) being the field of fractions of \( K[Y] \)), we find that \( y_3 y_5 \cdots y_{2(r-1)+1} \) must divide \( m_1 \) (by examining the coefficients of \( x_i \), where \( i = 2k+1 \), \( 0 \leq k \leq r-2 \), in \( \delta_1^n(e_i) \)).

However, since \( m_1 \) is of bidegree \( (0, r-1) \), it must be of the form \( m_1 = k y_3 y_5 \cdots y_{2(r-1)+1} \) for some \( k \in K \). But in \( \delta_1^n(e_{2r+1}) \), the coefficient of \( x_{2r-1} \) will be \( y_{2r+1} \), and clearly \( y_{2r+1} \nmid m_1 \). This implies that the coefficient of \( x_1 \) in \( \sum_{i=1}^{n} m_i \delta_1^n(e_i) \) would be non-zero, which is a contradiction to Equation (\ref{eg2}).

\end{proof}

\section{M-Sequences and d-Sequences} \label{$M$-sequences and $d$-sequences}

It is known that ideals generated by $M$-sequences are of Gröbner linear type, while ideals generated by $d$-sequences are of linear type. However, in general, there are no implications between $M$-sequences and monomial $d$-sequences. The following examples illustrate this:

\begin{enumerate}
\item An $M$-sequence need not be a $d$-sequence.\\
Let $B = K[x_1, x_2, x_3, x_4]$ and consider the sequence $\{x_1 x_3, x_3 x_4, x_2 x_4\}$. This sequence is an $M$-sequence but not a $d$-sequence, since $(\langle x_1 x_3 \rangle : x_3 x_4) \cap \langle x_1 x_3, x_3 x_4, x_2 x_4 \rangle = \langle x_1 x_3, x_1 x_2 x_4 \rangle \neq \langle x_1 x_3 \rangle$.

\item A monomial $d$-sequence need not be an $M$-sequence.\\
Let $B = K[x_1, x_2, x_3, x_4, x_5]$ and consider the sequence $\{x_1 x_2, x_3 x_4, x_1 x_5\}$. This sequence forms a $d$-sequence but is not an $M$-sequence.

\end{enumerate}

In this section, we provide conditions under which an $M$-sequence is a $d$-sequence and vice versa. Furthermore, we present some classes of $d$-sequences for which their arbitrary powers generate ideals of Gröbner linear type.\\

The following result gives conditions under which an $M$-sequence is a $d$-sequence.

\begin{prop} \label{m-seq}
Let $\{  \bf{a} \}$=$ \{ m_1, \ldots, m_s \}$ be a squarefree $M$-sequence in the set of indeterminates $X$. Then $\{  \bf{a} \}$ form a $d$-sequence if and only if for all $1 \leq j < s$ and $1 \leq k < j $ there exists $ 1 \leq l < j+1 $ such that $m_l (m_k,m_j)| m_k (m_l, m_{j+1})$ where $(m_i,m_j)$ denotes the greatest common divisor of $m_i$ and $m_j$.
\end{prop}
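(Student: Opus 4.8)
The plan is to reduce the statement to a computation with colon ideals in the Rees algebra. Recall from Theorem~\ref{y-reg_relations} (equivalence of (a) and (d)) that for a monomial ideal $I=\langle m_1,\dots,m_s\rangle$, the sequence $\{m_1,\dots,m_s\}$ is a $c$-sequence if and only if $\mathrm{reg}_y(\mathcal{R}(I))=0$. By the result of Kulosman \cite{c-seq_L.T} (see also \cite[Proposition 2.2]{c-seq}), a $c$-sequence is precisely a sequence whose homomorphic image in $\mathcal{R}(I_i)$ forms a $d$-sequence for each $i$; combined with the linear-type condition available for $M$-sequences (which are of Gr\"obner linear type by \cite{CN}), it suffices to check the weak relative regular sequence condition $(I_{i-1}I^kN:a_i)\cap I^kN = I_{i-1}I^{k-1}N$. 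So the first step is to fix the $M$-sequence structure: for each index $i$, choose the total order $x_1\prec\cdots\prec x_n$ witnessing the $M$-sequence property at step $i$, write $m_i = x_1^{a_{i_1}}\cdots x_n^{a_{i_n}}$, and record that $x_k\mid m_j$ with $i<j$ forces $x_k^{a_{i_k}}\cdots x_n^{a_{i_n}}\mid m_j$.

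The second step is to translate the colon condition into a gcd condition. For monomials, $\langle m_1,\dots,m_{j}\rangle : m_{j+1}$ is generated by the monomials $m_\ell/(m_\ell,m_{j+1})$ for $\ell\le j$, and similarly intersections of monomial ideals are governed by lcm's. Unwinding the condition that the element $m_k/(m_k,m_{j+1})$ (a generator of the colon ideal $I_{j}:m_{j+1}$, or rather of the appropriate $I_{i-1}I^k : a_i$ piece after the $c$-sequence reduction) already lies in the lower-degree ideal, one gets exactly a divisibility of the form "there is $\ell$ with $m_\ell(m_k,m_j)\mid m_k(m_\ell,m_{j+1})$." I would make this precise by writing each monomial additively as an exponent vector and checking the divisibility coordinate by coordinate, using the $M$-sequence divisibility chain to control the tail $x_k^{a_{i_k}}\cdots x_n^{a_{i_n}}$. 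The key algebraic identity to exploit is that for an $M$-sequence the membership questions in $I_{i-1}I^kN$ reduce to membership in $I_{i-1}$ twisted by a power, because the Rees algebra relations are linear and the leading terms are controlled.

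The third step is to run the equivalence in both directions. For the "only if" direction, assume $\{\mathbf{a}\}$ is a $c$-sequence; take a generator $m_k/(m_k,m_j)$ witnessing a potential failure and use the $c$-sequence (equivalently weak relative regular) equality to express it via generators $m_\ell/(m_\ell,m_{j+1})$ of the appropriate lower ideal, which yields the divisibility $m_\ell(m_k,m_j)\mid m_k(m_\ell,m_{j+1})$ after clearing denominators. For the "if" direction, assume the gcd condition holds for all relevant $j,k$; show directly that $(I_{j}I^kN : m_{j+1})\cap I^kN \subseteq I_{j}I^{k-1}N$ by taking an arbitrary monomial in the left side, factoring it through some $m_\ell$, and using the hypothesis plus the $M$-sequence chain to exhibit it in $I_jI^{k-1}N$; the reverse inclusion is automatic. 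The main obstacle I anticipate is bookkeeping: the $M$-sequence property allows a \emph{different} total order for each index $i$, so one must be careful that the gcd condition — stated uniformly over $j$ and $k<j$ — correctly captures the interaction across these possibly-different orders, and that the reduction from the full $c$-sequence condition (all powers $k$) down to a single divisibility statement genuinely goes through rather than requiring a family of conditions indexed by $k$. Verifying that the power $k$ drops out — essentially because for monomial $M$-sequences the relevant colons stabilize — is the crux, and I expect it follows from the Gr\"obner linear type property together with the explicit linear form of the defining relations of $\mathcal{R}(I)$.
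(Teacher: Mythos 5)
Your high-level framing is reasonable (use the fact that an $M$-sequence generates an ideal of linear type, then invoke the equivalences around Theorem \ref{y-reg_relations}), but the core of the argument is missing. The statement's gcd condition is, after clearing denominators, exactly the chain condition $(\langle m_1,\dots,m_{j-1}\rangle:m_j)\subseteq(\langle m_1,\dots,m_j\rangle:m_{j+1})$ on the colon ideals $L_j$, and the whole content of the proposition is to link this chain condition to the $c$-sequence property. You propose to do this by directly verifying the weak relative regular condition $(I_{i-1}I:m_i)\cap I=I_{i-1}$ through exponent-vector bookkeeping, but you never carry out that computation: both directions of your step three are sketches ("one gets exactly a divisibility of the form\dots", "I expect it follows from\dots"), and it is precisely there that the difficulty lies. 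In particular, it is not clear how a generator $m_k/(m_k,m_j)$ of $L_j$ gets rewritten in terms of generators of $L_{j+1}$ from the weak relative regular equality alone, nor how the chain condition alone forces membership of an arbitrary monomial of $(I_{j}I:m_{j+1})\cap I$ in $I_j$; both would need a genuine argument exploiting the $M$-sequence orders, which you only gesture at. Also, the "crux" you flag --- making the power $k$ drop out --- is not actually the issue: Kulosman's result (\cite[Corollary 2.3]{c-seq}), which you cite, already reduces the $c$-sequence condition to the $k=1$ weak relative regular condition once the ideal is of linear type. The unresolved gap is the equivalence between that $k=1$ condition and the gcd/chain condition.

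The paper's proof sidesteps all of this by using the second half of Conca--De Negri's theorem, which you do not use: for an $M$-sequence, \cite[Theorem 2.4]{CN} gives not only Gr\"obner linear type but the explicit initial ideal $\mathrm{in}(J)=\langle \frac{m_i}{(m_i,m_j)}y_j: 1\le i<j\le s\rangle=\langle L_1y_1,\dots,L_sy_s\rangle$ of the defining ideal of the Rees algebra. Hence an $M$-sequence is automatically an $s$-sequence, and the gcd condition is then literally the extra requirement $L_1\subseteq L_2\subseteq\cdots$ that upgrades it to a strong $s$-sequence. Since the ideal is of linear type, the equivalence $(c)\Longleftrightarrow(d)$ of Theorem \ref{y-reg_relations} (strong $s$-sequence plus linear type $\Longleftrightarrow$ $c$-sequence) finishes the proof with no monomial computation at all. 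If you want to salvage your route, you would have to either supply the full combinatorial verification of the weak relative regular condition from the gcd condition (and conversely), or, more efficiently, route the argument through the strong $s$-sequence characterization as the paper does.
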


\begin{proof}

Let $\{ m_1, \ldots, m_s \}$ be a monomial sequence. In this case, the colon ideals can be specifically expressed as \[
(\langle m_1, \ldots, m_{j-1} \rangle : m_j) = \left\langle \frac{m_i}{(m_i, m_j)} \mid 1 \leq i < j \right\rangle.
\]
Since $\{ m_1, \ldots, m_s \}$ forms an $M$-sequence, the ideal $I$ generated by this sequence is of linear type.

Assume $\tau$ is the lexicographic term order on $S = K[X, Y]$, where $Y = \begin{bmatrix} y_1, \ldots, y_s \end{bmatrix}$, induced by the total order $y_s > y_{s-1} > \ldots > y_1$. Consequently, as a result of Theorem \ref{m-seq_glt}, the initial ideal of the defining relations of $\mathcal{R}(I)$ with respect to $\tau$ is generated by $\frac{m_i}{(m_i, m_j)} y_j$ for $1 \leq i < j \leq s$. 

The condition that for all $1 \leq j < s$ and $1 \leq k < j$, there exists $1 \leq l < j + 1$ such that $m_l (m_k, m_j) \mid m_k (m_l, m_{j+1})$ is equivalent to the inclusion 
\[
(\langle m_1, \ldots, m_{j-1} \rangle : m_j) \subseteq (\langle m_1, \ldots, m_j \rangle : m_{j+1}).
\]
Thus, $\{ m_1, \ldots, m_s \}$ forms a strong $s$-sequence equivalently a proper sequence (Proposition \ref{strong_s_seq&proper_seq}) if and only if the assumptions in the proposition are satisfied. Since $\{ m_1, \ldots, m_s \}$ is a square-free monomial sequence, it follows that $(m_i, m_j) = (m_i, m_j^2)$ for all $1 \leq i < j \leq s$. The result thus follows from Theorem \ref{monomial_d&proper_seq}.
\end{proof}

An important consequence of Proposition \ref{m-seq} is that it allows one to deduce information about the $y$-regularity of the Rees algebras of the ideals generated by such $M$-sequences.

\begin{cor}
Let $\{  \bf{a} \}$=$ \{ m_1, \ldots, m_s \}$ be a squarefree $M$-sequence in the set of indeterminates $X$ satisfying the assumptions of Proposition \ref{m-seq}. Then, $\reg_y(\mathcal{R}(I))=0$.
\end{cor}

\begin{proof}
The result follows from Proposition \ref{m-seq} and Theorem \ref{Romer_characterization}(2).
\end{proof}

It is known that $d$-sequences generate ideals of linear type. We give conditions for monomial $d$-sequences to generate ideals of Gr\"obner linear type.


\begin{prop} \label{powers_of_dseq}
Let $\{ \mathbf{a} \} = \{ m_1, \ldots, m_s \}$ be a monomial $d$-sequence in a set of indeterminates $X =\begin{bmatrix}
 x_1  & \cdots & x_n
\end{bmatrix}$. Assume there exists a total order on the set of indeterminates that appear in $m_i$, say $x_1 < \cdots < x_n$, with respect to which $m_i = x_{i_1}^{a_{i_1}} \cdots x_{i_l}^{a_{i_l}}$, where $a_{i_j} > 0$ for $j = 1, \ldots, l$, $1 \leq l \leq n$. Then $\{ \mathbf{a} \}$ is an $M$-sequence if it satisfies the following condition:
$$ \text{If } x_{i_k} \mid m_i \text{ where } 1 \leq k \leq {l} \leq n, \text{ then } x_{i_k}^{a_{i_k}} \cdots x_{i_{l}}^{a_{i_l}} \mid m_j \text{ where } j = \min\{ l: x_{i_k} \mid m_l, \; l > i \}. $$
\end{prop}

\begin{proof}
Let \( \{ \mathbf{a} \} = \{ m_1, \ldots, m_s \} \) be a monomial \( d \)-sequence satisfying the condition that if \( x_{i_k} \mid m_i \) where \( 1 \leq k \leq l \leq n \), then \( x_{i_k}^{a_{i_k}} \cdots x_{i_l}^{a_{i_l}} \mid m_j \) where \( j = \min\{ l : x_{i_k} \mid m_l, \; l > i \} \). Since \( \{ \mathbf{a} \} \) is a monomial \( d \)-sequence, from Theorem \ref{monomial_d&proper_seq}, \( (m_i, m_j) \mid m_k \) for \( 1 \leq i < j < k \leq n \). This implies that if \( x_{i_k} \mid m_j \) with \( 1 \leq k \leq n \) and \( i < j \), then \( x_{i_k}^{a_{i_k}} \cdots x_{i_l}^{a_{i_l}} \mid m_j \). Therefore, \( \{ \mathbf{a} \} \) satisfies the condition of an \( M \)-sequence.
\end{proof}

As a significant consequence of Proposition \ref{powers_of_dseq}, we obtain a class of $d$-sequences where the powers of these sequences generate ideals of Gr\"obner linear type.
\begin{cor} 
Let $\{ \mathbf{a} \} = \{ m_1, \ldots, m_s \}$ be a monomial $d$-sequence in a set of indeterminates $X$ satisfying the hypotheses of Proposition \ref{powers_of_dseq}. Then $\{ m_1^{n_1}, \ldots, m_s^{n_s} \}$ generate ideals of Gr\"obner linear type, where $1 \leq n_1 \leq \cdots \leq n_s$ are integers.
\end{cor}

\begin{proof}

Let $\{ \mathbf{a} \} = \{ m_1, \ldots, m_s \}$ be a monomial $d$-sequence in a set of indeterminates $X$ satisfying the hypotheses of Proposition \ref{powers_of_dseq}. Then $\{ \mathbf{a} \}$ forms an $M$-sequence. This implies, by Lemma \ref{powers_of_mseq}, that for $1 \leq n_1 \leq \cdots \leq n_s$, $\{ m_1^{n_1}, \ldots, m_s^{n_s} \}$ forms an $M$-sequence. Thus, $\{ m_1^{n_1}, \ldots, m_s^{n_s} \}$ generates an ideal of Gr\"obner linear type from Theorem \ref{m-seq_glt}.
\end{proof}

\section{A class of algebras where weak d-sequence implies d-sequence} \label{sec4}

Weak $d$-sequences, as the name suggests, are a concept of sequences weaker than $d$-sequences, which are defined with respect to finite partially ordered sets (posets) and are used to study the depths of powers of ideals of various determinantal varieties.

Let $(\Lambda, \leq)$ be a finite partially ordered set. A subset $\Sigma$ of $\Lambda$ is called a poset ideal if, for every $\alpha \in \Sigma$ and $\beta \leq \alpha$, we have $\beta \in \Sigma$. An element $\lambda \in \Lambda$ is said to lie above $\Sigma$ if $\lambda \notin \Sigma$ and, for every $\alpha \in \Lambda$, $\alpha < \lambda$ implies $\alpha \in \Sigma$. Let $\{ x_{\lambda} \mid \lambda \in \Lambda \}$ be a set of elements indexed by $\Lambda$ in a commutative ring $A$. Define $I = \langle x_{\lambda} \mid \lambda \in \Lambda \rangle$, the ideal generated by these elements. For each poset ideal $\Sigma \subseteq \Lambda$, let $I_{\Sigma} = \langle x_{\sigma} \mid \sigma \in \Sigma \rangle$, the ideal generated by the elements indexed by $\Sigma$. For an ideal $J$ of $A$, let $J^* = \langle x_{\beta} \mid x_{\beta} \in J \rangle$, the ideal generated by the elements $x_{\beta}$ that belong to $J$.

\begin{defn} (cf. \cite{CH4}) 
 Consider the notations mentioned above. A set $\{ x_{\lambda}| \lambda \in \Lambda \}$ of elements indexed by $\Lambda$ form a weak $d$-sequence with respect to $(\Lambda, \leq)$ if for each poset ideal $\Sigma $ of $\Lambda$ and each element $\lambda$ lying above $\Sigma$, the following holds. 
 \begin{enumerate}
     \item $(I_{\Sigma}:x_{\lambda})^*$ is generated by some set $\{ x_{\beta}| \beta \in \Sigma'  \}$ where $\Sigma' \subseteq \Lambda$ is a poset ideal.
     \item $(I_{\Sigma}:x_{\lambda}) \cap I =(I_{\Sigma}:x_{\lambda})^*.$
     \item If $x_{\beta} \in (I_{\Sigma}:x_{\lambda})$, then $x_{\lambda}x_{\beta} \in I_{\Sigma}I$.
     \item If $x_{\lambda} \notin (I_{\Sigma}:x_{\lambda})$, then $(I:x_{\lambda})=(I:x_{\lambda}^2)$.
 \end{enumerate}
\end{defn}

In general, it is established that a $d$-sequence is a weak $d$-sequence with respect to the total order \cite[Corollary 1.1]{CH4}. Thus, it is interesting to investigate the cases when a weak $d$-sequence becomes a $d$-sequence. In this section, we explore a class of algebras where weak $d$-sequences with respect to the total order are $d$-sequences.

For this purpose, we first define what is meant by an algebra with straightening law (ASL).

Let $A$ be a commutative $A'$-algebra where $A'$ is a ring, and let $(\Lambda, \leq)$ be a finite poset such that $\Lambda \subset A$. A monomial $m = \lambda_1 \cdots \lambda_k$ of elements in $\Lambda$ is called \textit{standard} if $\lambda_1 \leq \cdots \leq \lambda_k$. For any two monomials $n_1 = \alpha_1 \cdots \alpha_k$ and $n_2 = \beta_1 \cdots \beta_\ell$ with elements $\alpha_1, \ldots, \alpha_k, \beta_1, \ldots, \beta_\ell$ in $\Lambda$, we say $n_1 \leq n_2$ if either $\alpha_1 \cdots \alpha_k$ is an initial subsequence of $\beta_1 \cdots \beta_\ell$ or if $\alpha_i < \beta_i$ for the first $i$ where $\alpha_i \neq \beta_i$. A \textit{straightening law} on $\Lambda$ for $A$ is a set of distinct algebra generators $\{ \bar{\lambda} \mid \lambda \in \Lambda \}$ for $A$ over $A'$, such that any monomial $m = \bar{\lambda_1} \cdots \bar{\lambda_k}$ in $A$ can be uniquely expressed as an $A'$-linear combination of standard monomials $m_i$ of $A$ with $m_i \leq m$ \cite{straightening_algebra}.

Consider the notations as described above.

\begin{rem} \label{rem_weakdseq}
Huneke provided a class of weak $d$-sequences in \cite[Proposition 1.3]{CH1} arising from algebras with straightening laws. He showed that if $A$ is an ASL on $\Lambda$ over $A'$, and $\Sigma$ is a poset ideal of $\Lambda$ such that, for non-comparable elements $\alpha, \beta \in \Sigma$, if the straightening of $\bar{\alpha} \bar{\beta}$ is $\sum r_i \bar{\gamma_i} \bar{m_i}$, where $\gamma_i < \alpha$ and $\gamma_i < \beta$, then $\bar{m_i} \in \Sigma$, it follows that $\{ \bar{\alpha} \mid \alpha \in \Sigma \}$ forms a weak $d$-sequence.
\end{rem}

\begin{thm} \label{weak_d-seq}
If $A$ is an algebra with a straightening law (ASL) on $\Lambda$ over a commutative ring $A'$, and $\Sigma \subseteq \Lambda$, then $\{ \bar{\lambda} \mid \lambda \in \Sigma \}$ forms a $d$-sequence in $A$ if and only if $\{ \bar{\lambda} \mid \lambda \in \Sigma \}$ forms a weak $d$-sequence with respect to some total order on $\Lambda$.
\end{thm}

\begin{proof}

$\Rightarrow$\\
Clearly, a $d$-sequence is always a weak $d$-sequence with respect to a total order.\\

$\Leftarrow$\\
For the converse, assume $\{ \bar{\lambda} \mid \lambda \in \Sigma \}$ forms a weak $d$-sequence with respect to a total order. Then, for each $\lambda \in \Sigma$, the poset ideal such that $\lambda$ lies above it has the form $\Sigma' = \{ \alpha \mid \alpha < \lambda \}$. From the definition of a weak $d$-sequence, we have the following:
\begin{enumerate}
    \item $(\bar{I_{\Sigma'}} : \bar{\lambda}) \cap \bar{I_{\Sigma}} = (\bar{I_{\Sigma'}} : \bar{\lambda})^*$.
    \item If $\bar{\beta} \in (\bar{I_{\Sigma'}} : \bar{\lambda})$, then $\bar{\lambda} \bar{\beta} \in \bar{I_{\Sigma'}} \bar{I_{\Sigma}}$.
\end{enumerate}

To prove that $\{ \bar{\lambda} \mid \lambda \in \Sigma \}$ forms a $d$-sequence, it suffices to show that $(\bar{I_{\Sigma'}} : \bar{\lambda}) \cap \bar{I_{\Sigma}} = \bar{I_{\Sigma'}}$. Clearly, $(\bar{I_{\Sigma'}} : \bar{\lambda}) \cap \bar{I_{\Sigma}} \supseteq \bar{I_{\Sigma'}}$. 

Now, if $\bar{\beta} \in (\bar{I_{\Sigma'}} : \bar{\lambda})$, then from the definition of a weak $d$-sequence, we have $\bar{\lambda} \bar{\beta} \in \bar{I_{\Sigma'}} \bar{I_{\Sigma}}$, where $\bar{\lambda} \notin \bar{I_{\Sigma'}}$. This implies that $\bar{\lambda} \bar{\beta}$ can be expressed in the form $\bar{\gamma'} \bar{\gamma}$ where $\gamma' \in \Sigma'$ and $\gamma \in \Sigma$.

Since $\{ \bar{\lambda} \mid \lambda \in \Sigma \}$ forms a weak $d$-sequence with respect to a total order, every monomial is a standard monomial. In particular, $\bar{\lambda} \bar{\beta}$ satisfies this property. Since any monomial in $A$ can be uniquely written as an $A'$-linear combination of standard monomials of $A$, it will have a unique representation. Thus, we obtain $\bar{\beta} \in \bar{I_{\Sigma'}}$, which implies $(\bar{I_{\Sigma'}} : \bar{\lambda}) \cap \bar{I_{\Sigma}} = \bar{I_{\Sigma'}}$. Therefore, $\{ \bar{\lambda} \mid \lambda \in \Sigma \}$ forms a $d$-sequence.

\end{proof}


Consider the skew-symmetric matrix $X = \begin{bmatrix}
0      & x_{12} & \ldots & x_{1\;n} \\
-x_{12} &   0    & \ldots & x_{2\;n} \\
\vdots & \vdots & \ddots & \vdots \\
-x_{1\;n} & -x_{2\;n} & \ldots &   0   
\end{bmatrix} $ of odd order $n = 2r + 1$, where $r \in \mathbb{N} \cup \{0\}$, and the entries $x_{ij}$ for $i = 1, \ldots, n-1$ and $j = i + 1, \ldots, n$ are indeterminates. This matrix $X$ is called a generic skew-symmetric matrix of order $n$. In \cite{CH1} (Pg. 481), Huneke proved that the maximal order Pfaffians of $X$ form a weak $d$-sequence with respect to some term order and commented that it seems likely that they also form a $d$-sequence. In a personal communication, K. N. Raghavan asked for a proof of the $d$-sequence property of the maximal order Pfaffians, which motivated us to explore the connection between weak $d$-sequences and $d$-sequences.

\begin{cor} \label{max-pfaffians}
Let $X$ be a generic skew-symmetric matrix of odd order $n = 2r + 1$, where $r \in \mathbb{N}$. Then the maximal order Pfaffians form an unconditioned $d$-sequence.
\end{cor}

\begin{proof}
Let $X$ be a generic skew-symmetric matrix of odd order, and let $\left[ i_1, \ldots, i_{s} \right]$ represent the Pfaffian determined by the $i_1, \ldots, i_{s}$ rows and the corresponding columns of $X$. Consider a partial order $\leq$ on the set of Pfaffians as follows: $\left[ i_1, \ldots, i_{2k} \right] \leq \left[ j_1, \ldots, j_{2l} \right]$ if and only if $l \geq k$ and $i_m \geq j_m$ for $m = 1, \ldots, 2l$. This partial order defines a total order on the maximal order Pfaffians of $X$. According to Remark \ref{rem_weakdseq}, the maximal order Pfaffians form a weak $d$-sequence with respect to this partial order. Since the restriction of this partial order to the set of maximal order Pfaffians of $X$ becomes a total order, the $d$-sequence property follows from Theorem \ref{weak_d-seq}.

Furthermore, swapping rows and the corresponding columns of $X$ does not affect the total order with respect to which the maximal order Pfaffians form a weak $d$-sequence. Therefore, the maximal order Pfaffians form an unconditioned $d$-sequence.

\end{proof}

\subsection*{Acknowledgment}
We wish to express our profound gratitude to the late Prof. Jürgen Herzog for his time, expertise, and insightful suggestions. Our main result in Section \ref{sec2} is a direct consequence of a problem he posed to us during a personal communication.

The first author is supported by Core Research Grant (CRG/2023/007668), from Anusandhan National Research Foundation (ANRF), India. The second author is financially supported by the INSPIRE fellowship, Department of Science and Technology (DST), India.

\end{document}